\def \N {\mathbb{N}}
\def \A {\mathcal{A}}
\def \B {\mathcal{B}}
\def \ol {\overline}
\def \sgp {\operatorname{Sgp}}
\def \mon {\operatorname{Mon}}
\newcommand{\fsa}[1]{\mathcal{#1}}
\newcommand{\defterm}[1]{\textit{#1}}
\DeclareMathOperator{\End}{End}
\newcommand{\displayaut}[1]{\par\medskip\centerline{\includegraphics{#1}}\medskip\noindent\ignorespaces}
\let\emptyword=\varepsilon
\newtheorem{theorem}{Theorem}
\newtheorem{lemma}[theorem]{Lemma}
\newtheorem{corollary}[theorem]{Corollary}
\newtheorem{conjecture}[theorem]{Conjecture}
\newtheorem{question}[theorem]{Open Problem}
\title{Automaton semigroup constructions}
\author{Tara Brough\footnote{School of Mathematics 
and Statistics, Mathematical Institute, North Haugh, St Andrews, Fife KY16 9SS, Scotland.  Email: tara@mcs.st-andrews.ac.uk.} and 
Alan J. Cain\footnote{Centro de Matem\'{a}tica, Faculdade de Ci\^{e}ncias, Universidade do Porto,
Rua do Campo Alegre 687, 4169--007 Porto, Portugal.  Email: ajcain@fc.up.pt}}
\date{}
\begin{document}

\maketitle

\begin{abstract}
The aim of this paper is to investigate whether the class of automaton
semigroups is closed under certain semigroup constructions. We prove
that the free product of two automaton semigroups that contain left
identities is again an automaton semigroup. We also show that the
class of automaton semigroups is closed under the combined operation
of `free product followed by adjoining an identity'. We present an
example of a free product of finite semigroups that we conjecture is
not an automaton semigroup. Turning to wreath products, we consider
two slight generalizations of the concept of an automaton semigroup,
and show that a wreath product of an automaton monoid and a finite
monoid arises as a generalized automaton semigroup in both
senses. We also suggest a potential counterexample that would show
that a wreath product of an automaton monoid and a finite monoid is
not a necessarily an automaton monoid in the usual sense.
\end{abstract}

\section{Introduction}

Automaton groups (that is, groups of automorphisms of labelled rooted trees
generated by actions of automata) arose in the 1980s as interesting
examples having `exotic' properties. Grigor\v{c}uk's infinite periodic
group \cite{grigorchuk_burnside} was the first such example, and it
inspired later ones such as the Gupta--Sidki group
\cite{gupta_burnside}. Since then, a substantial theory has developed;
see, for example, Nekrashevych's monograph \cite{nekrashevych_self} or
one of the surveys by the school led by Bartholdi, Grigorchuk,
Nekrashevych, and \v{S}uni\'{c}
\cite{bartholdi_fractal,bartholdi_branch,grigorchuk_self}.

In recent years, the notion of an automaton semigroup has emerged as a
natural generalization of automaton groups. The basic theory was
outlined by Grigorchuk, Nekrashevych \& Sushchanskii~\cite[esp.~\S 4
  \&~\S7.2]{grigorchuk_automata}. Silva \& Steinberg studied a class
of semigroup generalizing the lamplighter
group~\cite{silva_lamplighter}. Maltcev studied automaton semigroups
arising from Cayley automata (which arise from the Cayley graphs of
finite semigroups) \cite{maltcev_cayley}. These semigroups also formed
part of studies by the second author \cite{c_1auto} and by Mintz
\cite{mintz_cayley}. There have also been studies of algorithmic
problems (see, for instance, \cite{klimann_implementing}), leading to
the recent proof that the finiteness problem is undecidable for
automaton semigroups \cite{gillibert_finiteness}.

A fundamental question has been whether the class of automaton
semigroups is closed under various semigroup constructions. For some
constructions, such as direct products and adjoining a zero or
identity, it is straightforward to prove that the class is closed; see
\cite[\S~5]{c_1auto}. This paper deals with the problems of whether the
class of automaton semigroups is closed under forming semigroup or
monoid free products; and whether the class automaton monoids is closed 
under forming wreath products with finite top semigroup. 
The first of these problems is connected with the even more fundamental
question of whether all finite-rank free groups (which are, after all,
free products of copies of the infinite cyclic group) arise as
automaton groups; only recently has this question been answered
positively \cite{steinberg_automata}.  In contrast, it is relatively
easy to construct all free semigroups (of rank at least $2$) as
automaton semigroups \cite[Proposition~4.1]{c_1auto}.

One problem with constructing counterexamples is that if a semigroup
has the properties that automaton semigroups have generally, such as
residual finiteness \cite[Proposition~3.2]{c_1auto}, then there are no
general techniques for proving it is not an automaton semigroup. For
instance, the proof that the free semigroup of rank $1$ is not an
automaton semigroup is highly specialized and does not seem to
generalize to give any useful strategy
\cite[Proposition~4.3]{c_1auto}.
For research on this topic to develop much further, it will be essential 
to develop techniques for proving that a given semigroup is not an automaton semigroup.

\section{Automaton semigroups}

An \defterm{automaton} $\fsa{A}$ is formally a triple $(Q,B,\delta)$,
where $Q$ is a finite set of \defterm{states}, $B$ is a finite
alphabet of \defterm{symbols}, and $\delta$ is a transformation of the
set $Q \times B$. The automaton $\fsa{A}$ is normally viewed as a
directed labelled graph with vertex set $Q$ and an edge from $q$ to
$r$ labelled by $x \mid y$ when $(q,x)\delta = (r,y)$:
\displayaut{\jobname-1.eps}
The interpretation of this is that if the automaton $\fsa{A}$ is in
state $q$ and reads symbol $x$, then it changes to the state $r$ and
outputs the symbol $y$. Thus, starting in some state $q_0$, the
automaton can read a sequence of symbols
$\alpha_1\alpha_2\ldots\alpha_n$ and output a sequence
$\beta_1\beta_2\ldots\beta_n$, where $(q_{i-1},\alpha_i)\delta =
(q_i,\beta_i)$ for all $i = 1,\ldots,n$.

Such automata are more usually known in computer science as
deterministic real-time (synchronous) transducers. In the field of
automaton semigroups and groups, they are simply called `automata' and
this paper retains this terminology.

Each state $q \in Q$ acts on $B^*$, the set of finite sequences of
elements of $B$. The action of $q \in Q$ on $B^*$ is defined as
follows: $\alpha \cdot q$ (the result of $q$ acting on $\alpha$) is
defined to be the sequence the automaton outputs when it starts in the
state $q$ and reads the sequence $\alpha$. That is, if $\alpha =
\alpha_1\alpha_2\ldots\alpha_n$ (where $\alpha_i \in B$), then $\alpha
\cdot q$ is the sequence $\beta_1\beta_2\ldots\beta_n$ (where $\beta_i
\in B$), where $(q_{i-1},\alpha_i)\delta = (q_i,\beta_i)$ for all $i =
1,\ldots,n$, with $q_0 = q$.

The set $B^*$ can be identified with an ordered regular tree of degree
$|B|$. The vertices of this tree are labelled by the elements of
$B^*$. The root vertex is labelled with the empty word $\emptyword$,
and a vertex labelled $\alpha$ (where $\alpha \in B^*$) has $|B|$
children whose labels are $\alpha\beta$ for each $\beta \in B$. It is
convenient not to distinguish between a vertex and its label, and thus
one normally refers to `the vertex $\alpha$' rather than `the vertex
labelled by $\alpha$'. (Figure~\ref{fig:rootedtree} illustrates the
tree corresponding to $\{0,1\}^*$.)

\begin{figure}[tb]
\centerline{\includegraphics{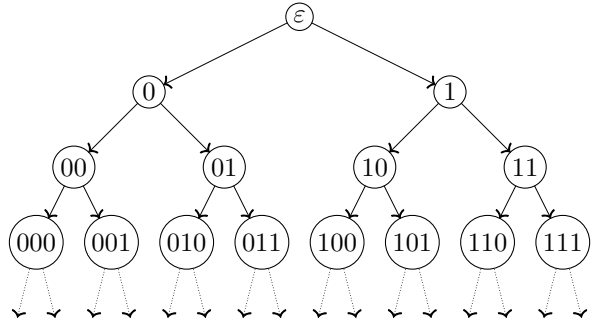}}
\caption{The set $\{0,1\}^*$ viewed as a rooted binary tree.}
\label{fig:rootedtree}
\end{figure}

The action of a state $q$ on $B^*$ can thus be viewed as a
transformation of the corresponding tree, sending the vertex $w$ to
the vertex $w \cdot q$. Notice that, by the definition of the action
of $q$, if $\alpha\alpha' \cdot q = \beta\beta'$ (where $\alpha,\beta
\in B^*$ and $\alpha',\beta' \in B$), then $\alpha \cdot q = \beta
\cdot q$. In terms of the transformation on the tree, this says that
if one vertex ($\alpha$) is the parent of another ($\alpha\alpha'$),
then their images under the action by $q$ are also parent ($\beta$)
and child ($\beta\beta'$) vertices. More concisely, the action of $q$
on the tree preserves adjacency and is thus an endomorphism of the
tree. Furthermore, the action's preservation of lengths of sequences
becomes a preservation of levels in the tree.

The actions of states extends naturally to actions of words: $w =
w_1\cdots w_n$ (where $w_i \in Q$) acts on $\alpha \in B^*$ by
\[
(\cdots((\alpha \cdot w_1)\cdot w_2) \cdots w_{n-1})\cdot w_n.
\]

So there is a natural homomorphism $\phi : Q^+ \to \End B^*$, where
$\End B^*$ denotes the endomorphism semigroup of the tree $B^*$. The
image of $\phi$ in $\End B^*$, which is necessarily a semigroup, is
denoted $\Sigma(\fsa{A})$.

A semigroup $S$ is called an \defterm{automaton semigroup} if there
exists an automaton $\fsa{A}$ such that $S \simeq \Sigma(\fsa{A})$.

It is often more convenient to reason about the action of a state or
word on a single sequence of infinite length than on sequences of some
arbitrary fixed length. The set of infinite sequences over $B$ is
denoted $B^\omega$. The infinite sequence consisting of countably many
repetitions of the finite word $\alpha \in B^*$ is denoted
$\alpha^\omega$. For synchronous automata, the action on infinite
sequences determines the action on finite sequences and \emph{vice
  versa}.

The following lemma summarizes the conditions under which two words
$w$ and $w'$ in $Q^+$ represent the same element of the automaton
semigroup. The results follow immediately from the definitions, but are
so fundamental that they deserve explicit statement:

\begin{lemma}
\label{lem:eq}
Let $w,w' \in Q^+$. Then the following are equivalent:
\begin{enumerate}
\item $w$ and $w'$ represent the same element of $\Sigma(\fsa{A})$;
\item $w\phi = w'\phi$;
\item $\alpha\cdot w = \alpha\cdot w'$ for each $\alpha \in B^*$;
\item $w$ and $w'$ have the same actions on $B^n$ for every $n \in
  \N^0$;
\item $w$ and $w'$ have the same actions on $B^\omega$.
\end{enumerate}
\end{lemma}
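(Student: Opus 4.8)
The plan is to prove the equivalence of the five conditions by establishing a cycle of implications (or equivalently, showing each follows readily from the definitions). The statement essentially unwinds the definitions of the homomorphism $\phi$ and the tree action, so the proof should be short, with the main content being a careful bookkeeping of how the different descriptions of "equality of action" relate to one another.

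First I would observe that (i) and (ii) are equivalent essentially by definition: an element of $\Sigma(\fsa{A})$ \emph{is} by construction the image under $\phi$ of a word in $Q^+$, so $w$ and $w'$ represent the same element precisely when $w\phi = w'\phi$. Next, (ii) and (iii) are equivalent because $\phi$ maps into $\End B^*$, whose elements are endomorphisms of the tree $B^*$; two such endomorphisms are equal if and only if they agree on every vertex, i.e.\ $\alpha \cdot w = \alpha \cdot w'$ for all $\alpha \in B^*$. The equivalence of (iii) and (iv) is a matter of reorganising a quantifier: since $B^* = \bigcup_{n \in \N^0} B^n$, agreement on all of $B^*$ is the same as agreement on each level $B^n$ separately.

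The only step requiring a genuine argument is the equivalence of (iii)/(iv) with (v), the passage between finite and infinite sequences. For the forward direction, if $w$ and $w'$ agree on all finite sequences, then given any $\xi \in B^\omega$, every finite prefix of $\xi \cdot w$ is determined by the corresponding finite prefix of $\xi$ (this is exactly the length- and prefix-preservation property noted in the excerpt, that $\alpha\alpha' \cdot q = \beta\beta'$ forces $\alpha \cdot q = \beta \cdot q$); since $w$ and $w'$ act identically on each such prefix, $\xi \cdot w$ and $\xi \cdot w'$ agree on every prefix and are therefore equal. For the converse, suppose $w$ and $w'$ agree on $B^\omega$ but differ on some finite word $\alpha \in B^n$; then extending $\alpha$ to any infinite sequence $\alpha\xi \in B^\omega$ and using prefix-preservation, the length-$n$ prefix of $(\alpha\xi)\cdot w$ is $\alpha \cdot w$ while that of $(\alpha\xi)\cdot w'$ is $\alpha \cdot w'$, contradicting agreement on $B^\omega$.

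The main (and only real) obstacle is making precise this finite/infinite correspondence, and in particular justifying that an infinite action is determined by, and determines, its finite truncations. This rests entirely on the prefix-preservation property highlighted in the discussion preceding the lemma, so I expect the argument to be routine once that property is invoked; indeed the authors themselves remark that the results "follow immediately from the definitions," so I would keep the write-up brief and emphasise the role of prefix-preservation in the $(\mathrm{iv}) \Leftrightarrow (\mathrm{v})$ step.
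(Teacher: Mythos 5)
Your proof is correct and matches the paper's intent: the paper gives no explicit proof, stating only that the equivalences ``follow immediately from the definitions,'' and your argument is exactly the routine definition-unwinding (plus the prefix-preservation/synchronicity observation for the finite--infinite step) that the authors have in mind.
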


Generally, there is no need to make a notational distinction between
$w$ and $w\phi$. Thus $w$ denotes both an element of $Q^+$ and the
image of this word in $\Sigma(\fsa{A})$. In particular, one writes `$w
= w'$ in $\Sigma(\fsa{A})$' instead of the strictly correct `$w\phi =
w'\phi$'. With this convention, notice that $Q$ generates
$\Sigma(\fsa{A})$.

\section{Free products}

The \emph{semigroup free product} (henceforth \emph{free product}) of two semigroups $S = \sgp \langle X_1 \mid R_1\rangle$ and 
$T = \sgp \langle X_2 \mid R_2\rangle$, denoted $S\star T$, is the semigroup with presentation $\sgp \langle X_1\cup X_2 \mid R_1\cup R_2\rangle$.
If $S = \mon \langle X_1 \mid R_1\rangle$ and $T = \mon \langle X_2 \mid R_2\rangle$ are monoids, then the \emph{monoid free product}
$S *_{\mon} T$ of 
$S$ and $T$ is the monoid with presentation $\mon \langle X_1\cup X_2 \mid R_1\cup R_2 \rangle$.  The difference is that the identities of $S$ and $T$
are amalgamated in the monoid free product, but not in the semigroup free product (even if present).

We will show that if two automaton semigroups both contain left
identities, then their free product is an automaton semigroup.  Also,
if we take the free product of any two automaton semigroups \emph{and
  adjoin an identity}, then the result is an automaton semigroup.  We
conjecture that the class of automaton semigroups is \emph{not} closed
under taking free products, and we suggest a possible counterexample.

\begin{theorem}
Let $S$ and $T$ be automaton semigroups, each containing a left identity.  Then $S\star T$ is an automaton semigroup.
\end{theorem}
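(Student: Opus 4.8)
The plan is to realize $S \star T$ as $\Sigma(\fsa{C})$ for a single automaton $\fsa{C}$ assembled from automata for the two factors. Write $S \simeq \Sigma(\fsa{A})$ and $T \simeq \Sigma(\fsa{B})$ with $\fsa{A} = (Q_S, B_S, \delta_S)$ and $\fsa{B} = (Q_T, B_T, \delta_T)$, and assume, relabelling if necessary, that $Q_S, Q_T$ and $B_S, B_T$ are pairwise disjoint. Fix a word $e \in Q_S^+$ representing a left identity of $S$ and a word $f \in Q_T^+$ representing a left identity of $T$; each is necessarily idempotent, and $\alpha \cdot e \cdot w = \alpha \cdot w$ for every $w \in Q_S^+$ and $\alpha \in B_S^*$, with the symmetric statement for $f$. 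The automaton $\fsa{C}$ will run over an alphabet $B$ built from $B_S \sqcup B_T$ (with, if needed, an extra separator symbol), its state set being $Q_S \sqcup Q_T$ together with a small collection of auxiliary \emph{crossing} states introduced below.

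On symbols of its own alphabet each generator acts exactly as before: for $a \in Q_S$ and $\sigma \in B_S$ one sets $(a,\sigma)\delta = (a',\sigma')$ whenever $(a,\sigma)\delta_S = (a',\sigma')$, and dually for $Q_T$ on $B_T$. The essential choices are the crossing transitions, which govern what an $S$-state does on encountering a $B_T$-symbol and vice versa. I would define these so that, on meeting a foreign symbol, an $S$-state leaves that symbol fixed and hands over to an auxiliary state that behaves like the left identity $e$ on any subsequent $B_S$-symbols, and symmetrically a $T$-state hands over to one behaving like $f$. Routing the hand-over through $e$ and $f$ rather than through a literal trivial action is the crux: because $\alpha\cdot e$ need not equal $\alpha$, the $S$- and $T$-actions do \emph{not} collapse into a commuting pair (which would yield only the direct product), yet the left-identity property makes the hand-overs consistent with the defining relations.

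With $\fsa{C}$ in hand I would first check that sending each generator of $S$ and of $T$ to the corresponding state of $\fsa{C}$ extends to a homomorphism $S \star T \to \Sigma(\fsa{C})$. Since $S\star T$ is presented by $\sgp\langle Q_S \cup Q_T \mid R_S \cup R_T\rangle$, it suffices to verify that every relation of $R_S$ and of $R_T$ holds in $\Sigma(\fsa{C})$. On inputs beginning with a $B_S$-symbol this is immediate, as the $\fsa{A}$-action is reproduced verbatim; on inputs involving $B_T$-symbols it follows from the uniform, left-identity-governed definition of the crossing states, via Lemma~\ref{lem:eq}. As $Q_S \cup Q_T$ generates both sides, the homomorphism is surjective.

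The substantial step, and the main obstacle, is to prove this homomorphism injective, that is, that distinct elements of $S \star T$ act differently on $B^\omega$. Here I would argue from the free-product normal form: every nontrivial element is uniquely an alternating product $c_1 c_2 \cdots c_k$ of syllables taken alternately from $S$ and $T$, and the aim is to recover $k$ and each syllable from the action. The strategy is to feed a test sequence in $B^\omega$ consisting of a block of $B_S$-symbols, then a block of $B_T$-symbols, then a further $B_S$-block, and so on, long enough to probe every syllable; because $S$ acts faithfully on $B_S^\omega$ and $T$ on $B_T^\omega$ (Lemma~\ref{lem:eq}), each syllable's action can be read off on a block of its own type, provided the crossing states shepherd the $n$-th syllable past the blocks already used and onto the correct block for its position. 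Making this precise, namely showing that the crossings advance cleanly so that a syllable neither disturbs blocks belonging to earlier syllables nor prematurely merges with the next, is where the left-identity hypothesis is used in full force; it is also exactly the point at which the argument would break down for factors lacking left identities, consistent with the paper's conjecture that closure fails in general.
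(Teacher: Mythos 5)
Your overall architecture matches the paper's: build one automaton over a combined alphabet whose states include the generators of both factors, reduce well-definedness to checking the relations $R_S\cup R_T$, and prove faithfulness via the alternating normal form and carefully chosen test sequences. But your crossing rule breaks the construction. If an $S$-state hands over to an identity-like auxiliary state at the \emph{first} foreign symbol it meets, then each generator only ever acts ``as itself'' on the initial block of its own alphabet type and acts as a left identity everywhere after that. Concretely, for $s\in Q_S$, $t\in Q_T$, $\alpha\in B_S^{+}$, $\beta\in B_T^{+}$, your rule gives $\alpha\beta\cdot st=(\alpha\cdot s)(\beta\cdot f)=\alpha\beta\cdot ts$ and $\beta\alpha\cdot st=(\beta\cdot t)(\alpha\cdot e)=\beta\alpha\cdot ts$ (and similarly on longer alternating strings, where $s_1t_1s_2t_2$ and $s_1s_2t_1t_2$ become indistinguishable): the second and later syllables never reach ``their'' block in their own state, so the alternation structure is destroyed and the quotient you obtain is at best a direct product. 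This is precisely the ``shepherding'' step you flag as the main obstacle and leave unproved; for the automaton you describe it is not merely unproved but false.

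The two ingredients you are missing are the heart of the paper's proof. First, the combined alphabet carries an inert \emph{marked} copy $x^\circ$ of every symbol, fixed by all states: when a state of one factor passes over symbols of the other factor's alphabet it \emph{marks} them \emph{without changing state}, so that a block already consumed by an earlier syllable becomes invisible to later syllables, which can therefore traverse it while still remembering which element they are. Second, the hand-over happens only at designated separator symbols ($\$$ for $S$-states, $\#$ for $T$-states), and it is to the left identity of the \emph{other} factor (in the paper, $\delta(s,\$)=(l_T,\$)$), not, as you write, to a state behaving like the left identity $e$ of the same factor; this is what lets the word $s_1t_1s_2t_2\cdots$ applied to $u_1\$v_1\#u_2\$v_2\#\cdots$ deposit $u_i\cdot s_i$ and $v_i\cdot t_i$ in the $i$-th blocks. (A smaller point: rather than auxiliary states simulating words $e\in Q_S^{+}$, one simply enlarges the state sets so that the left identities are themselves states, as the paper does.) Finally, even with faithful syllable recovery you still need to separate elements of different reduced lengths or with different leading factors; the paper does this with the sequences $(\$\#)^\omega$ and $(\#\$)^\omega$, a step absent from your sketch.
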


[There are two dual notions of automaton semigroups, depending on whether the states act on the right (as in this paper) or on the left.
Using the dual notion would give a version of this proposition for semigroups with right identities.]

\begin{proof}
Let $l_S$ and $l_T$ be distinguised left identities in $S$ and $T$ respectively.
Let $\A_1 = (Q_1,A,\delta_1)$ and $\A_2 = (Q_2,B,\delta_2)$ be automata with $\Sigma(\A_1) = S$ and $\Sigma(\A_2) = T$.
We may assume that $Q_1\subset S$ and $Q_2\subset T$ and that $Q_1$ and $Q_2$ contain $l_S$ and $l_T$ respectively.

We construct an automaton $\A$ with $\Sigma(\A) = S\star T$ using $\A_1$ and $\A_2$.
First let $A^\circ = \{a^\circ\mid a\in A\}$ and $B^\circ = \{b^\circ \mid b\in B\}$.
Let $\A = (Q, C, \delta)$ with $Q = Q_1\cup Q_2$, $C = A\cup B\cup A^\circ \cup B^\circ \cup \{\$,\#,\$^\circ,\#^\circ\}$
and $\delta: Q\times C\rightarrow Q\times C$ given by $\delta(q,x^\circ) = (q,x^\circ)$ for all $q\in Q$ and
$x\in A\cup B\cup \{\$,\#\}$ and
\begin{align*}
\delta(s,a)       &= \delta_1(s,a)    &  \delta(t,b)       &= \delta_2(t,b)\\
\delta(s,b)       &= (s,b^\circ)      &  \delta(t,a)       &= (t,a^\circ)\\
\delta(s,\#)      &= (s,\#^\circ)     &  \delta(t,\$)      &= (t,\$^\circ)\\
\delta(s,\$)      &= (l_T,\$)         &  \delta(t,\#)      &= (l_S,\#)
\end{align*}
for $a\in A$, $b\in B$, $s\in Q_1$ and $t\in Q_2$.

We will henceforth refer to the subautomaton consisting of states from $Q_i$ as $\A'_i$ ($i = 1,2$).
We will refer to symbols in $A^\circ \cup B^\circ \cup \{\$^\circ,\#^\circ\}$ as `marked symbols'.
All states act trivially on marked symbols.  Symbols are marked in order to show that they are finished with
and should never be altered further.

The subautomaton $\A'_1$ acts the same as $\A_1$ on symbols from $A$, marks symbols from $B\cup \{\#\}$ without changing state,
and transitions to the state $l_T$ in $\A_2$ on input $\$$, outputting $\$$.
The construction is symmetric in the pairs $(\A_1,\A_2)$, $(\A'_1,\A'_2)$, $(A,B)$ and $(\$,\#)$.

Any $\alpha \in C^*$ is a prefix of some $\beta = u_1 \$ v_1 \# \ldots u_i \$ v_i \# \ldots \in C^\omega$ with
${u_j\in (C\setminus \{\$\})^*}$ and $v_j\in (C\setminus \{\#\})^*$.
We have  \[\beta\cdot l_S = (u_1\cdot l_S) \$ (v_1\cdot l_T) \# \ldots (u_i\cdot l_S) \$ (v_i\cdot l_T) \# \ldots.\]

Since $l_S$ and $l_T$ act as idempotents on $C\setminus \{\$\}$ and $C\setminus \{\#\}$ respectively,
this shows that $l_S$ (and, by symmetry, $l_T$) is an idempotent in $\Sigma(\A)$.
Hence, for $\beta$ as above and $s_1\ldots s_k\in Q_1$, we have
\begin{align*}
\beta\cdot s_1\ldots s_k &= (u_1\cdot s_1) \$ (v_1\cdot l_T) \# \ldots (u_i\cdot l_S) \$ (v_i\cdot l_T) \# \ldots \cdot s_2\ldots s_k\\
&= (u_1\cdot s_1\ldots s_k) \$ (v_1\cdot l_T) \# \ldots (u_i\cdot l_S) \$ (v_i\cdot l_T) \# \ldots,
\end{align*}
so the action of $Q_1^+$ on $C^*$ depends only on its action on $A^*$, and hence the subsemigroup of $\Sigma(\A)$ generated by $Q_1^+$
is isomorphic to $S$.  By symmetry, the subsemigroup generated by $Q_2^+$ is isomorphic to $T$.
Hence, by induction, any two words in $Q^+$ representing the same element of $S\star T$ will act the same on $C^*$,
so $\A$ defines an action of $S\star T$ on $C^*$.

Now in order to conclude that $S(\A) = S\star T$, we need to show that the action of $S\star T$ on $C^\omega$ defined by $\A$ is faithful.
We can distinguish most pairs of strings by their actions on $(\$\#)^\omega$ and $(\#\$)^\omega$.
For $w\in Q^+$, define $x_w = (\$\#)^\omega\cdot w$ and $y_w = (\#\$)^\omega\cdot w$.
Let $\ol{w}$ be the reduced word in $(S\cup T)^*$ corresponding to $w$.
For $w\in Q_1 Q^+$, we have
\[x_w = \begin{cases}(\$^\circ\#^\circ)^{k-1} \$^\circ (\#\$)^\omega & \text{if } l(\ol{w}) = 2k,\\
(\$^\circ\#^\circ)^k (\$\#)^\omega & \text{if } l(\ol{w}) = 2k+1,
\end{cases}
\]
while
\[y_w = \begin{cases} (\#^\circ\$^\circ)^k (\#\$)^\omega & \text{if } l(\ol{w}) = 2k,\\
(\#^\circ\$^\circ)^k \#^\circ (\$\#)^\omega & \text{if } l(\ol{w}) = 2k+1.
\end{cases}
\]
Together with the corresponding statements for $w\in Q_2 Q^+$, this tells us that the only pairs elements of
$S\star T$ which cannot be distinguished by $x_w$ and $y_w$ are those of the same reduced length, beginning with a letter from the same $Q_i$.

Finally, let $w$ and $w'$ be words in $Q^+$ of the same reduced length, representing different elements of $S\star T$
and starting with symbols from the same $Q^+$.
The cases for odd and even length are almost identical; we present the even length case.
Without loss of generality, take $w = s_1 t_1 \ldots s_k t_k$ and $w' = s_1' t_1' \ldots s_k' t_k'$ with $s_i, s_i'\in Q_1^+$ and $t_i, t_i'\in Q_2^+$.
If $w$ and $w'$ represent different elements of $S\star T$, then we must have $\ol{s_i}\neq \ol{s_i'}$ or $\ol{t_i}\neq \ol{t_i'}$ for some $i$.
Let us suppose the former, the other case being very similar.
Let $\alpha\in A^*$ with $\alpha\cdot s_i \neq \alpha\cdot s_i'$ and let $\beta = (\$\#)^{i-1} \alpha (\$\#)^{k-i+1}$.  Then
\begin{align*}
\beta\cdot w &= (\$^\circ \#^\circ)^{i-1} (\alpha\cdot s_i)^\circ (\$^\circ \#^\circ)^{k-i} \$^\circ \#,\\
\beta\cdot w' &= (\$^\circ \#^\circ)^{i-1} (\alpha\cdot s_i')^\circ (\$^\circ \#^\circ)^{k-i} \$^\circ \#
\end{align*}
and so $w\neq w'$ in $\Sigma(\A)$.  Hence $\Sigma(\A) = S\star T$.
\end{proof}

The next theorem shows that free products of automaton semigroups are very close to being automaton semigroups.

\begin{theorem}\label{adj}
Let $S$ and $T$ be automaton semigroups.  Then $(S\star T)^1$ is an automaton semigroup.
\end{theorem}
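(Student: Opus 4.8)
The plan is to modify the automaton construction from the proof of the previous theorem, supplying the missing left identities by means of a single new state that acts as a total identity. Concretely, I would take automata $\A_1 = (Q_1,A,\delta_1)$ and $\A_2 = (Q_2,B,\delta_2)$ with $\Sigma(\A_1) = S$, $\Sigma(\A_2) = T$, and $Q_1\subseteq S$, $Q_2\subseteq T$, but now \emph{without} any hypothesis about identities. I would then build $\A = (Q,C,\delta)$ with $Q = Q_1\cup Q_2\cup\{e\}$ and the same alphabet $C = A\cup B\cup A^\circ\cup B^\circ\cup\{\$,\#,\$^\circ,\#^\circ\}$. The transition function would be exactly as before, except that: (i) I set $\delta(e,x) = (e,x)$ for every $x\in C$, so that $e$ reads every symbol and outputs it unchanged; and (ii) the two marker-transitions that previously entered the distinguished left identities are redirected into $e$, that is, $\delta(s,\$) = (e,\$)$ for $s\in Q_1$ and $\delta(t,\#) = (e,\#)$ for $t\in Q_2$. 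Thus $e$ is an absorbing `freezing' state, and $Q_1$- and $Q_2$-states no longer transition into one another.

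First I would observe that $e$ acts as the identity transformation on $C^*$, so that $e$ represents a two-sided identity of $\Sigma(\A)$; this is immediate from $\delta(e,x) = (e,x)$. Next I would show that the subsemigroup generated by $Q_1$ is still isomorphic to $S$ (and, symmetrically, that generated by $Q_2$ to $T$): for $w\in Q_1^+$, once the computation reaches the first $\$$ it transitions to $e$ and freezes, so the action of $w$ on any $\beta = u_1\$ v_1 \# \cdots$ depends only on its action on the first block $u_1$, and on the $A$-subsequence of $u_1$ it agrees with the $\A_1$-action (every non-$A$ symbol being merely marked and thereafter left inert). Exactly as in the previous theorem, this yields a well-defined and faithful action of $S\star T$ by the subsemigroup generated by $Q_1\cup Q_2$.

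The heart of the argument is to verify that the faithfulness computations transfer verbatim. I would recompute $x_w = (\$\#)^\omega\cdot w$ and $y_w = (\#\$)^\omega\cdot w$ and check that the freezing state $e$ produces precisely the same marked-prefix patterns as the left identities did: each syllable boundary of the reduced word $\ol{w}$ marks exactly one further separator before the action stalls, so the formulas for $x_w$ and $y_w$ in terms of $l(\ol{w})$ are unchanged, and they distinguish all pairs except those of equal reduced length beginning in the same factor. For the remaining pairs I would use the same test words $\beta = (\$\#)^{i-1}\alpha(\$\#)^{k-i+1}$ and check, by a direct trace, that the $i$-th block is processed by the appropriate syllable while every other block is either frozen by $e$ or marked inertly, so that $\beta\cdot w$ again has the form $(\$^\circ\#^\circ)^{i-1}(\alpha\cdot s_i)^\circ(\$^\circ\#^\circ)^{k-i}\$^\circ\#$. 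Hence distinct elements of $S\star T$ stay distinguished.

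Finally I would conclude as follows. Since $S\star T$ contains no identity element and the representation is faithful, no word over $Q_1\cup Q_2$ acts trivially, so $e$ is genuinely new; and because $e$ equals the identity transformation it absorbs into every product, giving $\Sigma(\A) = (S\star T)\cup\{e\} = (S\star T)^1$. The conceptual point, and the step I expect to need the most care, is precisely that one total-identity state $e$ can do the work of the two per-factor left identities $l_S,l_T$: although $e$ cannot be a left identity inside either factor, it freezes every block regardless of factor and so serves both marker-transitions simultaneously, while its unavoidable presence as a global identity is exactly what manufactures the adjoined $1$. The main obstacle is confirming that this freezing behaviour collapses no pair that the left-identity behaviour kept apart, which the block-by-block traces above are designed to rule out.
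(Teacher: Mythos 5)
Your proposal is correct and is essentially the paper's own argument: the paper likewise replaces the two distinguished left identities by a single identity state $1$ acting trivially on all of $C^*$, redirects the transitions $\delta(s,\$)$ and $\delta(t,\#)$ into that state, and then reruns the well-definedness and faithfulness computations from the preceding theorem. Your write-up is in fact somewhat more detailed than the paper's proof sketch, but there is no difference in method.
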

\begin{proof}[Proof sketch.]
The idea is very similar to the previous proposition, but instead of the states $1_S$ and $1_T$ we have a single state $1$,
which acts as the identity on all strings.

We construct an automaton $\A$ with $\Sigma(\A) = (S\star T)^1$ as follows:
Let $\A_1 = (Q_1,A,\delta_1)$ and $\A_2 = (Q_2,B,\delta_2)$ be automata for $S$ and $T$ respectively
and let the alphabet of $\A$ be $C = A\cup B\cup A^\circ \cup B^\circ \cup \{\$,\#,\$^\circ,\#^\circ\}$,
with $A^\circ$ and $B^\circ$ defined as in the previous proof.
Let the set of states of $\A$ be $S\cup T\cup \{1\}$.
Define the transition function $\delta: Q\times C\rightarrow Q\times C$ by $\delta(1,c) = (1,c)$ for all $c\in C$,
$\delta(q,x^\circ) = (q,x^\circ)$ for all $q\in Q$, $x\in A\cup B\cup \{\$,\#\}$ and
\begin{align*}
\delta(s,a)       &= \delta_1(s,a)    &  \delta(t,b)     &= \delta_2(t,b)\\
\delta(s,b)       &= (s,b^\circ)      &  \delta(t,a)     &= (t,a^\circ)\\
\delta(s,\#)      &= (s,\#^\circ)     &  \delta(t,\$)    &= (t,\#^\circ)\\
\delta(s,\$)      &= (1,\$)           &  \delta(t,\#)    &= (1,\#)
\end{align*}
for $a\in A$, $b\in B$, $s\in Q_1$, $t\in Q_2$.

As before, the actions of $S$ on $(C\setminus \{\$\})^*$ and of $T$ on $(C\setminus \{\#\})^*$ are determined solely by their actions on
$A^*$ and $B^*$ respectively.  For $u\in (C\setminus \{\$\})^*$, $v\in C^\omega$ and $s\in Q_1^+$, we have
$u \$ v\cdot s = (u\cdot s) \$ v$.  So again the subsemigroups generated by $Q_1$ and by $Q_2$ are isomorphic to $S$ and $T$ respectively,
and since the state $1$ acts as the identity on all strings, we conclude that $\A$ defines an action of $(S\star T)^1$ on $C^*$.

We distinguish between elements of $(S\star T)^1$ in essentially the same way as in the previous proof.
\end{proof}

If $S$ and $T$ are semigroups, then $S^1 *_{\mon} T^1 = (S\star T)^1$.  If $M$ is a monoid with indecomposable identity, then 
$S = M\setminus \{1\}$ is a semigroup with $M = S^1$.  Hence we have the following corollary to Proposition~\ref{adj}.

\begin{corollary}
The monoid free product of two automaton monoids with indecomposable identities is an automaton monoid.
\end{corollary}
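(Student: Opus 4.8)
The plan is to reduce the statement directly to Theorem~\ref{adj} by means of the two displayed identities. Given automaton monoids $M_1$ and $M_2$ with indecomposable identities, set $S = M_1\setminus\{1\}$ and $T = M_2\setminus\{1\}$; indecomposability guarantees that these are subsemigroups and that $M_1 = S^1$ and $M_2 = T^1$. Hence $M_1 *_{\mon} M_2 = S^1 *_{\mon} T^1 = (S\star T)^1$, and Theorem~\ref{adj} finishes the argument \emph{provided} $S$ and $T$ are themselves automaton semigroups. The whole difficulty is therefore concentrated in a single lemma: if $M$ is an automaton monoid whose identity is indecomposable, then $M\setminus\{1\}$ is an automaton semigroup.

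To prove this lemma I would first fix a generating automaton $\A = (Q,B,\delta)$ with $\Sigma(\A)=M$ and $Q\subseteq M$. Indecomposability forces the identity to be one of the states: if $1_M = q_1\cdots q_k$ with each $q_i\in Q$, then writing $1_M = q_1(q_2\cdots q_k)$ and iterating shows every $q_i = 1_M$, so $1_M$ is a single state $e$. The same observation shows that $Q_0 = Q\setminus\{e\}$ generates $S = M\setminus\{1\}$: any representative word for a non-identity element stays non-empty after deleting the (trivially acting) occurrences of $e$, and so yields an $e$-free representative over $Q_0$.

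The obstacle is that the subautomaton on $Q_0$ need not be closed: a non-identity state $q$ may have a section equal to the identity, that is, $\delta(q,b) = (e,b')$, so that simply deleting $e$ breaks the transition structure. This cannot be avoided by re-choosing $\A$, since a trivial section of \emph{any} non-identity element of $M$, traced through the automaton, forces some single state to have a trivial section. My proposed remedy is to enlarge the alphabet with marked copies, exactly as in the proofs above, and to reroute each transition $\delta(q,b)=(e,b')$ so that, once a section becomes trivial, the remainder of the input is copied verbatim into the marked alphabet, every state thereafter fixing marked symbols. The point requiring genuine care, and the step I expect to be the main obstacle, is to arrange this copying behaviour \emph{without enlarging the generated semigroup}: the auxiliary copying action must be expressible by a word over $Q_0$, or otherwise shown to coincide with an element already present, so that the new automaton generates exactly $S$ rather than a strictly larger semigroup. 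A careless version of the construction introduces a spurious left-zero-like element $p$ and produces $\langle Q_0,p\rangle \supsetneq S$, so this redundancy must be engineered deliberately.

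Once the lemma is in hand, the faithfulness of the resulting action is verified by the same distinguishing-string technique (via Lemma~\ref{lem:eq}) used in the previous two proofs, and the corollary follows immediately: $S$ and $T$ are automaton semigroups, so by Theorem~\ref{adj} the semigroup $(S\star T)^1 = M_1 *_{\mon} M_2$ is an automaton semigroup, and being a monoid it is an automaton monoid.
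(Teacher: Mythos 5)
Your opening reduction --- $M_1 *_{\mon} M_2 = S^1 *_{\mon} T^1 = (S\star T)^1$ with $S = M_1\setminus\{1\}$ and $T = M_2\setminus\{1\}$ --- is exactly the paper's, and you are right that invoking Theorem~\ref{adj} as a black box then requires $S$ and $T$ to be automaton semigroups. The difficulty is that the lemma you introduce to supply this --- that $M\setminus\{1\}$ is an automaton semigroup whenever $M$ is an automaton monoid with indecomposable identity --- is false, and the counterexample is already cited in this paper: the free monoid of rank $1$ is an automaton monoid \cite[Proposition~4.4]{c_1auto} whose identity is indecomposable (in $(\N_0,+)$, $0=m+n$ forces $m=n=0$), yet deleting the identity leaves the free semigroup of rank $1$, which is not an automaton semigroup \cite[Proposition~4.3]{c_1auto}; Section~5 of the paper uses precisely this pair to show that passing to a subsemigroup of finite complement can destroy the property. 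Consequently the marking-and-rerouting construction you sketch cannot be made to generate exactly $S$: the step you flag as ``the main obstacle'' is not a delicate point to be engineered around but an outright impossibility, and the second half of your argument collapses with it.

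The corollary should instead be obtained by running the \emph{construction} of Theorem~\ref{adj} on automata $\A_1$, $\A_2$ generating $M_1$ and $M_2$ themselves, not on (possibly nonexistent) automata for $S$ and $T$. Indecomposability enters exactly where you placed it: it forces $1_{M_1}$ and $1_{M_2}$ to be single states $e_1$, $e_2$, and forces $S$ and $T$ to be subsemigroups generated by the remaining states. But its purpose is to let one redefine $e_1$ and $e_2$ in the combined automaton so that they fix every letter of the combined alphabet $C$; they then act as the identity transformation of $C^*$ and so coincide with the adjoined state $1$ in $\Sigma(\A)$. One checks that the subsemigroup generated by $Q_1$ is still isomorphic to $M_1 = S^1$ (words containing $e_1$ act like their $e_1$-free reductions), and with the three identities amalgamated the semigroup generated is $(S\star T)^1 = M_1 *_{\mon} M_2$ rather than $(M_1\star M_2)^1$. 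Faithfulness is then verified by the distinguishing-string argument you describe, via Lemma~\ref{lem:eq}.
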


Our constructions rely heavily on the use of a state acting as an identity or left identity on all strings.  
Since all free semigroups of rank at least $2$ are automaton semigroups, we know that the presence of left identities is not 
essential for a free product of automaton semigroups to be an automaton semigroup, but we conjecture that the class of 
automaton semigroups is not closed under free products.  Indeed we conjecture something a good deal stronger:

\begin{conjecture}
There exist finite semigroups $S$ and $T$ such that $S\star T$ is not an automaton semigroup.
\end{conjecture}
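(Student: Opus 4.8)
The plan is to establish the conjecture in the standard way for a non-closure result: exhibit explicit finite semigroups $S$ and $T$ and then prove, by contradiction, that $S \star T$ admits no automaton presentation. For the candidate I would take $S$ and $T$ to be small finite semigroups possessing \emph{no} left identity — for instance two-element left-zero or null semigroups — so that the free-product constructions of the previous theorems, which rely on distinguished (left) identity states to handle syllable transitions, have no analogue available. Concretely I would first record a normal form for $S \star T$: every element is a reduced alternating product $u_1 u_2 \cdots u_m$ whose syllables $u_j$ lie alternately in $S$ and $T$, with multiplication being concatenation followed by reduction only \emph{within} a factor. The feature to exploit is the resulting two-regime multiplication: appending a generator either lengthens the reduced word (when it begins a new syllable) or merges into the final syllable (when it stays in the same factor), and there is no identity either to absorb a letter or to mark the boundary between syllables.

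Next, assuming $S \star T \cong \Sigma(\A)$ for some automaton $\A = (Q,C,\delta)$, I would use the conventions of Section~2 to take $Q \subseteq S \star T$ generating, and exploit the faithfulness of the action on $C^\omega$ guaranteed by Lemma~\ref{lem:eq}. The core of the argument would be a finite-state obstruction: since $Q$ is finite, the set of all sections $\{g|_v : v \in C^*\}$ of every $g \in \Sigma(\A)$ is drawn from the finite set of words of fixed length over $Q$, and sections of a product are the corresponding products of sections. I would then try to turn the unbounded, memory-dependent merge/no-merge behaviour of the free product into a pigeonhole contradiction: track how the reduced syllable length $\ell(g)$ of an element is reflected in its sections deep in the tree, and show that faithfulness on $C^\omega$ together with the finiteness of $Q$ forces either a coincidence among infinitely many genuinely distinct reduced words, or forces some state of $\A$ to act as a left identity on a large invariant set of sequences — a feature that $S \star T$ lacks when $S$ and $T$ have no left identities and none has been adjoined (contrast Theorem~\ref{adj}).

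This last step is exactly where I expect the real difficulty to lie, and it is the reason the statement is offered as a conjecture rather than a theorem. The template is the proof that the free monogenic semigroup is not an automaton semigroup \cite{c_1auto}, which likewise hinges on the impossibility of an infinite ``identity-free'' cyclic action; but, as noted in the introduction, that argument is highly specialized and does not obviously transfer. The obstacle is structural: the distinction between the two multiplication regimes of a free product is a global property requiring unbounded memory of where the syllable boundaries fall, whereas an automaton is finite-state — yet this intuition must be made to rule out \emph{every} automaton, including ones employing auxiliary states with subtle actions, and at present there is no general method for proving non-automaticity of a finitely generated residually finite semigroup. Converting the heuristic finite-state/unbounded-memory tension into a rigorous invariant that $S \star T$ provably violates is the crux, and it remains open; the recent undecidability of the finiteness problem for automaton semigroups \cite{gillibert_finiteness} is further evidence that questions of this shape are genuinely hard.
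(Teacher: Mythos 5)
This statement is presented in the paper as a conjecture, not a theorem: the authors offer no proof, only the remark that they hope to establish it using very small semigroups (they suggest taking $S$ to be the trivial semigroup and $T$ a two-element null semigroup). Your proposal is therefore not being measured against an actual argument in the paper, and, as you yourself acknowledge in your final paragraph, it is not a proof either. Your choice of candidates (two-element left-zero or null semigroups, with no left identities) is essentially the same as the authors' suggestion, and your diagnosis of why the constructions of the earlier theorems fail without a left identity is accurate and matches the authors' own commentary.

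The genuine gap is the one you name: the step where the ``unbounded memory of syllable boundaries versus finite-state sections'' intuition is converted into a contradiction is entirely missing, and nothing in your sketch indicates how to carry it out. The pigeonhole on sections does not obviously work, because an automaton representing $S\star T$ is free to use auxiliary states and an arbitrarily large alphabet, and the sections of an element need not encode its reduced syllable length in any controlled way; faithfulness on $C^\omega$ is a condition on the whole action, not on individual sections, so ``a coincidence among infinitely many distinct reduced words'' is not forced by finiteness of $Q$ alone. The comparison with the proof that the free semigroup of rank $1$ is not an automaton semigroup is apt but, as the paper's introduction stresses, that argument is highly specialized and no general technique for proving non-automaticity is currently known. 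In short, your proposal is a reasonable research plan whose crucial step remains open --- which is consistent with the statement's status in the paper as a conjecture rather than a theorem.
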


We have some hope that we may be able to prove this conjecture using very small semigroups $S$ and $T$, for example
$S$ the trivial semigroup and $T$ a two-element null semigroup (that is, $T = \{t,z\}$ with all products equal to $z$).

\section{Wreath products}

The wreath product of two automaton semigroups is certainly not always an automaton semigroup, since it need not even be finitely generated.
One way to ensure that a wreath product $S\wr T$ is finitely generated is to require $S$ and $T$ to be monoids, with $T$ finite.
The second author asked in \cite{c_1auto} whether, under these restrictions, taking wreath products of automaton monoids always gives 
an automaton monoid.  The answer is almost certainly no, and in fact wreath products of automaton monoids are probably almost never automaton monoids.  

For monoids $S$ and $T$ with $T = \{t_1,\ldots,t_n\}$ finite, the \emph{wreath product} $S\wr T$ of $S$ with $T$ is a semidirect product $S^{|T|}\rtimes T$,
where $T$ acts on elements of $S^{|T|}$ by $(s_{t_1},s_{t_2},\ldots,s_{t_n})^t = (s_{t_1t}, s_{t_2t}, \ldots, s_{t_nt})$.

\begin{conjecture}
The wreath product $\N_0\wr C_2$ is not an automaton monoid.
\end{conjecture}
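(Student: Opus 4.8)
The guiding principle is the rank-$1$ analogy: the infinite cyclic group $\Z$ is an automaton group (realized by the binary adding machine), whereas the free monogenic semigroup $\N$ is \emph{not} an automaton semigroup \cite[Proposition~4.3]{c_1auto}. Since $\N_0\wr C_2$ is the monoid analogue of the group $\Z\wr C_2$ — replacing each $\Z$-valued lamp by an $\N_0$-valued one — one expects the failure of realizability to descend from the rank-$1$ obstruction rather than from any generic property such as residual finiteness (which $\N_0\wr C_2$ enjoys). Writing $C_2=\{e,\sigma\}$ and elements of $M:=\N_0\wr C_2$ as $((m_1,m_2),g)$, the plan begins by isolating two pieces of structure. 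First, because $\sigma$ merely swaps the two coordinates, the map $\ell\colon M\to\N_0$, $((m_1,m_2),g)\mapsto m_1+m_2$, is a surjective homomorphism. Second, $u:=((1,1),e)$ is central and of infinite order, so $U:=\{u^k:k\geq 1\}$ is a \emph{central} subsemigroup of $M$ isomorphic to the free monogenic semigroup $\N$.

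The strategy is then to argue by contradiction: assuming $M=\Sigma(\A)$ for some automaton $\A=(Q,B,\delta)$, I would try to transfer non-realizability from $M$ to $U$, contradicting \cite[Proposition~4.3]{c_1auto}. The cleanest route would be via the subautomaton principle already exploited in the proof of the first theorem, whereby a subsemigroup generated by a suitably closed set of states is itself an automaton semigroup. Concretely, I would fix a word representing $u$ and study the section recursion of the powers $u^n$ on the tree $B^*$: each section is a product of states, the root transformations of the $u^n$ lie in the finite transformation monoid of $B$ and so are eventually periodic in $n$, while $\ell$ forces the section values to grow without bound. The aim is to show that these two facts are incompatible with the $u^n$ being pairwise distinct as tree endomorphisms (Lemma~\ref{lem:eq}); that is, that some $u^m$ and $u^{m'}$ with $m\neq m'$ must act identically on $B^\omega$.

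The main obstacle is precisely the point at which the rank-$1$ argument is, as the introduction stresses, highly specialized and does not seem to generalize. In the monogenic case every section of the single generator is again a power of that generator, so the whole recursion stays inside a commutative rank-$1$ world. Here the natural expression $u=x\,t\,x\,t$, where $x=((1,0),e)$ is a single-lamp generator and $t=((0,0),\sigma)$ is the switch, unavoidably involves $t$; and since $txt=((0,1),e)$ activates the other lamp, the sections of a word representing $u$ need not lie in $U$ and may interleave the two coordinates together with the switch. The recursion therefore escapes the commutative rank-$1$ picture, and the subautomaton principle cannot be applied verbatim because we do not control which states of the given $\A$ generate $U$. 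The crux is thus to manufacture an invariant of the action — one compatible with both the coordinate swap and the length homomorphism $\ell$ — that nevertheless forces eventual periodicity of $\{u^n\}$.

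Failing a clean reduction, a reasonable fallback is to combine $\ell$ with the finiteness of $Q$ to bound the behaviour of any hypothetical realization and then rule out small automata directly, for instance by a pigeonhole or computer-assisted analysis of the action on the low levels $B^{\leq k}$. This would not prove the conjecture on its own, but it would sharpen the analogy and might expose the invariant required for the general argument. In keeping with the remarks in the introduction, I expect no appeal to residual finiteness to help, so the decisive step must be a genuinely new, specialized dynamical obstruction of the kind that the rank-$1$ proof exemplifies.
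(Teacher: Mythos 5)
This statement is a \emph{conjecture}: the paper offers no proof of it, only an informal discussion of why the ``current element of $T$'' seems impossible to track, together with the weaker positive results (Theorems~\ref{wreathsub} and~\ref{init}) that realize $S\wr T$ as a finitely generated subsemigroup of an automaton semigroup and as an initial-symbol automaton monoid. Your proposal is likewise not a proof, and you say so yourself; so the first thing to record is simply that neither you nor the paper settles the question. What is worth examining is whether your proposed reduction could in principle work, and here there is a genuine structural flaw beyond the gap you acknowledge.

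Your plan is to locate the central copy $U=\{u^k: k\geq 1\}\simeq\N$ inside $M=\N_0\wr C_2$ and ``transfer non-realizability from $M$ to $U$,'' contradicting the fact that the free monogenic semigroup is not an automaton semigroup. But being a subsemigroup --- even a central, canonically defined one --- of an automaton semigroroup carries no obstruction whatsoever: $\N$ already sits as a subsemigroup inside $\N_0$, which \emph{is} an automaton monoid (this is exactly the small-extension phenomenon discussed in the paper's final section, where $S^1$ can be an automaton semigroup while $S$ is not). So any argument that only uses the existence of the embedded copy of $\N$ is doomed from the start; the class of automaton semigroups is not closed under passing to subsemigroups, and Proposition~4.3 of \cite{c_1auto} says nothing about semigroups merely containing $\N$. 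For the reduction to bite, you would need the element $u$ to be represented by a word $w$ whose sections at every vertex of $B^*$ are again powers of $u$ (so that $\langle u\rangle$ is self-similar and the specialized rank-$1$ analysis applies), and as you correctly observe, the natural expression $u=xtxt$ gives no control over the sections, which may wander through the whole of $M$. This is not a technical inconvenience to be patched later: it is the entire content of the problem, and your surjection $\ell$ and the eventual periodicity of root transformations do not combine to give the required invariant. The fallback of ruling out small automata by computation would, as you say, prove nothing. In short, the proposal correctly identifies why the conjecture is hard but does not advance it; the one concrete claim it leans on (that a central copy of $\N$ obstructs realizability) is false as stated.
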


The obstruction to constructing an automaton $\A$ with $\Sigma(\A) = S\wr T$ seems to be that the automaton needs to be aware
at all times of the `current element' of $T$, in order for states from $S$ to act correctly.  
By this we mean that if we are computing the action of a word $q_1\ldots q_n$, then in order to know how $q_i$ should act on strings,
if $q_1\ldots q_{i-1} = s_i t_i$ with $s_i\in S$, $t_i\in T$, then we need to know $t_i$.  If we attempt to store this information in the states,
we end up with states which do not act as elements of $S\wr T$ on strings.  If we attempt to store the information in the strings,
the difficulty is that we would like it to occur only once, at the start of the string, but it seems impossible to guarantee this without using 
additional states.  The automaton cannot tell what point it is up to in the string, so it treats all symbols of the type used to encode 
the current $t_i$ as if they were in fact $t_i$, leading seemingly unavoidably to `misdirections', in which the automaton does not act as intended.

Semigroups generated by only some of the states in an automaton are also worthwhile objects of study, considered for example in \cite{grigorchuk_automata}.
They are, of course, simply finitely generated subsemigroups of automaton semigroups (since we can always add any finite set of elements of a 
semigroup to the state set for the automaton).  With this point of view, our first obstruction to closure under wreath products falls away.

\begin{theorem}\label{wreathsub}
Let $S$ and $T$ be automaton monoids with $T$ finite.  Then $S\wr T$ is a finitely generated subsemigroup of an automaton semigroup.
\end{theorem}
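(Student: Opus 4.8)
The plan is to realise $S\wr T$ as the subsemigroup generated by a chosen set of states of a single automaton $\A$, relying on the observation recorded just above the theorem that finitely generated subsemigroups of automaton semigroups are exactly the subsemigroups generated by some of the states (so auxiliary, non-generator states are allowed). The geometric idea is the standard wreath recursion: since $T$ is a finite monoid, its right regular representation $t\mapsto(r\mapsto rt)$ on the finite set $T$ is faithful, so I view an element $(\mathbf s,t)\in S^{|T|}\rtimes T$ (with $\mathbf s=(s_r)_{r\in T}$) as acting on strings of the form $r\alpha$, where $r\in T$ is a single leading letter and $\alpha\in A^*$, by
\[ (r\alpha)\cdot(\mathbf s,t)=(rt)\,(\alpha\cdot s_r). \]
A direct check shows this is a faithful right action whose composition reproduces the wreath multiplication $(\mathbf s,t)(\mathbf s',t')=(\mathbf s\,(\mathbf s')^{t},tt')$ with $(\mathbf s')^{t}_r=s'_{rt}$, which is exactly the operation defining $S\wr T$.

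To turn this into an automaton, let $\A_S=(Q_S,A,\delta_S)$ satisfy $\Sigma(\A_S)=S$, take $C=A\sqcup T$ (the elements of $T$ used as fresh letters), and let the states of $\A$ be $Q_S$ together with an identity state $\iota$ and the intended generators $\tau_t$ ($t\in T$) and $\gamma_{a,r}$ ($a\in Q_S$, $r\in T$). I would define the transitions so that: $\iota$ fixes every letter; each $S$-state runs $\A_S$ on letters of $A$ and passes over letters of $T$ without changing state or symbol; $\tau_t$ sends a leading letter $\rho\in T$ to $\rho t$ and then becomes $\iota$; and $\gamma_{a,r}$ reads a leading letter $\rho\in T$, becomes the $S$-state $a$ if $\rho=r$ and $\iota$ otherwise, leaving $\rho$ unchanged. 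Crucially, $\tau_t$ and $\gamma_{a,r}$ transition to $\iota$ the moment they read a leading letter outside $T$. The generating set $\{\tau_t\}\cup\{\gamma_{a,r}\}$ is finite, and an arbitrary component $s_r\in S$ is never stored in a single state: a general $(\mathbf s,t)$ is realised as a \emph{product} of these generators, the auxiliary $S$-states merely carrying a chosen component along the tail.

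The verification I would carry out is an induction on the length of a word $W$ over $\{\tau_t,\gamma_{a,r}\}$: if $W$ represents $(\mathbf s,t)$, then on every string whose first letter lies in $T$ it acts by $\rho\mapsto\rho t$ while applying $s_\rho$ (indexed by the original first letter $\rho$) to the subword of $A$-letters, the interior $T$-letters staying fixed and inert; and on every other string it acts as the identity. The inductive step is precisely the computation reproducing $(\mathbf s\,(\mathbf s')^{t},tt')$, using that in $\Sigma(\A_S)$ composition of states realises the product in $S$. This shows the action on $C^*$ depends only on $(\mathbf s,t)$, yielding a well-defined homomorphism from $S\wr T$ onto $U=\langle\tau_t,\gamma_{a,r}\rangle\le\Sigma(\A)$; this homomorphism is onto because the $\gamma_{a,r}$ build every $(\mathbf s,1)$ and the $\tau_t$ supply the top component. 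Injectivity is read off from the $T$-leading strings: distinct top parts are separated on the one-letter string $1\in T$ by faithfulness of the right regular representation, and distinct base parts on some $\rho\alpha$ by faithfulness of $\A_S$ on $A^*$.

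The main obstacle is the consistency of the action on \emph{malformed} strings — those containing $T$-letters inside the tail, or not beginning with a $T$-letter — which is exactly the misdirection identified as the barrier to an honest automaton-monoid construction. My resolution leans entirely on the subsemigroup viewpoint: the auxiliary states $Q_S$ and $\iota$ need not themselves be elements of $S\wr T$, so they can absorb the selected component and treat interior $T$-letters as inert barriers, while the routing to $\iota$ forces every generator to act trivially whenever it does not begin on a $T$-letter. This is what makes elements equal in $S\wr T$ act identically everywhere, and it is the step that fails once one insists that every state be an element of the wreath product.
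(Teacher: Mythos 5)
Your overall strategy is the same as the paper's: store the ``current element'' of $T$ as a leading letter, let $T$-states right-translate it and then go inert, let the base generators read the leading letter before acting on the tail, and invoke the subsemigroup-generated-by-designated-states viewpoint to license auxiliary states. However, there is a genuine gap at the step ``this shows the action on $C^*$ depends only on $(\mathbf{s},t)$''. The identity element $1_S$ of an automaton monoid need not act as the identity endomorphism of $A^*$: it is merely an idempotent endomorphism neutral for the other elements of $\Sigma(\A_S)$. (For instance the trivial monoid is $\Sigma(\A)$ for the one-state automaton over $\{a,b\}$ whose state rewrites every letter to $b$.) Your generator $\gamma_{a,r}$ is meant to represent the element with $a$ in coordinate $r$ and $1_S$ in all other coordinates, but on a string $\rho\alpha$ with $\rho\in T\setminus\{r\}$ it routes to $\iota$ and fixes $\alpha$, i.e.\ it realises the coordinate $1_S$ by the identity transformation rather than by the transformation $1_S$ actually induces on $A^*$. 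Hence two words equal in $S\wr T$ can act differently: $\gamma_{a,r}$ and $\gamma_{a,r}\gamma_{e_1,r'}\cdots\gamma_{e_k,r'}$, where $e_1\cdots e_k=1_S$ in $S$ and $r'\neq r$, represent the same wreath-product element but disagree on $r'\alpha$ whenever $\alpha\cdot 1_S\neq\alpha$. In the extreme case where $S$ and $T$ are both trivial and $S$ is realised as above, your $\tau_1$ and $\gamma_{q,1}$ are distinct elements of $\Sigma(\A)$ even though $S\wr T$ has one element; what your automaton actually captures is a piece of $(S\cup\{\mathrm{id}\})\wr T$, not $S\wr T$.

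The repair is routine but necessary: on reading $\rho\neq r$ the generator $\gamma_{a,r}$ (and likewise $\tau_t$ after translating the leading letter) should pass to a state whose action on the $A$-subsequence equals that of $1_S$, not to $\iota$; such a state, together with the finitely many states realising its sections, can be adjoined to $\A_S$ without changing $\Sigma(\A_S)$. (Routing to $\iota$ on strings that do not begin with a $T$-letter remains fine, since there every generator acts identically.) The paper sidesteps the issue by a different encoding of the base: its generators are full tuples in $Q^n$ acting on the blown-up alphabet $A^n$ via the direct-product automaton, so every coordinate of every generator is an honest state of $\A_S$ and no artificial identity coordinate ever appears. Your single-letter alphabet with coordinate-selecting generators is more economical, and once the $1_S$ point is fixed your induction and faithfulness argument do go through.
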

\begin{proof}
Let $S = \Sigma(\A)$ with $\A = (Q,A,\delta)$ be an automaton monoid and let $T = \{t_1,\ldots,t_n\}$ be a finite monoid.
We construct an automaton $\B$ such that $S\wr T$ is a subsemigroup of $\Sigma(\B)$.
The automaton $\B$ has state set $Q_1^n\cup Q_2^n\cup T$, where each $Q_i$ is a copy of $Q$; and alphabet $A^n\cup B$, where $B$ is a copy of $T$.
For $q\in Q^n$, we denote the copy of $q$ in $Q_i$ by $q_i$.

In a state $t\in T$, the automaton remains in state $T$, not altering the input string until a symbol $b\in B$ is read, at which point it outputs $\ol{bt}$
and moves to state $1_T$, hence leaving the remainder of the string unchanged.
The states in $Q_2^n$ act on symbols in $A^n$ in exactly the same way as in the standard automaton for the direct product $S^n$ (see \cite[Proposition~5.5]{c_1auto}),
ignoring symbols from $B$.
A state $s\in Q_1^n$ ignores symbols from $A^n$, and moves to the state corresponding to $s^b$ in $Q_2^n$ upon reading a symbol $b\in B$ (leaving $b$ unchanged).

All symbols from $A^n$ before the first symbol from $B$, and all subsequent symbols from $B$, are ignored by all states.  Hence the action of $\B$ on $(A^n\cup B)^*$
is completely determined by its action on $B(A^n)^\omega$.  For $\alpha\in (A^n)^*$, $b\in B$, $q_i\in Q_i^n$ $(i=1,2)$ and $t\in T$, we have
\[b\alpha\cdot q_1 = b (\alpha\cdot q^b), \;
b\alpha\cdot q_2 = b (\alpha\cdot q) \; \hbox{and} \;
b\alpha\cdot t = \ol{bt}\alpha.\]

Using these facts, it is straightforward to show that $S\wr T$ is isomorphic to the subsemigroup of $\Sigma(\A)$ generated by $Q_1^n\cup T$.
\end{proof}

We might choose to remove the second obstruction instead, by allowing ourselves to restrict the set of strings the automaton acts on.
The proof of Theorem~\ref{wreathsub} gives an indication of one way to do this.
We define an \emph{initial-symbol automaton semigroup} to be a semigroup which is obtained from an automaton $\A = (Q,B,\delta)$ in
the same way as the automaton semigroup $\Sigma(\A)$, except that we only consider the action on strings in $C D^\omega$, where 
$B$ is the disjoint union of $C$ and $D$.  This is not so far from an automaton semigroup, as $C D^\omega$ can still be viewed as 
a rooted (almost regular) tree, but with the root having a (potentially) different degree to the remaining vertices.  Such a tree is a very natural 
structure for a wreath product to act on.

\begin{theorem}\label{init}
Let $S$ and $T$ be automaton monoids with $T$ finite.  Then $S\wr T$ is an initial-symbol automaton monoid.
\end{theorem}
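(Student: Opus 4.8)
The plan is to reuse the automaton $\B$ constructed in the proof of Theorem~\ref{wreathsub}, but now realise $S\wr T$ directly as an initial-symbol automaton semigroup rather than merely as a subsemigroup. Recall that in that construction the action of $\B$ on $(A^n\cup B)^*$ is entirely determined by its action on strings of the form $B(A^n)^\omega$, that is, a single symbol from $B$ followed by symbols from $A^n$. This is precisely the shape of an initial-symbol automaton with $C = B$ and $D = A^n$: the root of the tree has degree $|B| = |T|$, and every other vertex has degree $|A^n| = |A|^n$. So the first step is to observe that the very same automaton $\B$, reinterpreted as an initial-symbol automaton acting only on $B(A^n)^\omega$, is the natural candidate.

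The second step is to choose the right generating set. In Theorem~\ref{wreathsub} we took the subsemigroup generated by $Q_1^n\cup T$, because the states in $Q_2^n$ were auxiliary — they arose only as the images of $Q_1^n$ states after reading the initial $B$-symbol. In the initial-symbol setting the states of the automaton and the generators of the semigroup need not coincide in the same rigid way; nevertheless I would keep the full state set $Q_1^n\cup Q_2^n\cup T$ for the automaton $\B$ and show that $\Sigma$ of this initial-symbol automaton \emph{is} $S\wr T$, not just a subsemigroup containing it. The point is that the restriction to $B(A^n)^\omega$ forces every string to begin with exactly one $B$-symbol, so a word $q_1\cdots q_m$ in the states acts by first having its leading run of $T$-states and $Q_1^n$-states consume that initial symbol (recording the current element of $T$ and switching the $Q_1^n$-states into their $Q_2^n$ versions), after which the remaining action is governed entirely by $Q_2^n$-states operating on the $(A^n)^\omega$ tail exactly as in the direct-product automaton for $S^n$.

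The third step is the faithfulness argument, which is where I would be most careful. I need to show that two words in the states that act identically on all of $B(A^n)^\omega$ represent the same element of $S\wr T$, and conversely. Using the three computation rules $b\alpha\cdot q_1 = b(\alpha\cdot q^b)$, $b\alpha\cdot q_2 = b(\alpha\cdot q)$, and $b\alpha\cdot t = \ol{bt}\alpha$, I would track how an arbitrary product of states transforms the pair consisting of the leading $B$-symbol and the $(A^n)^*$-tail. The leading symbol records the accumulated $T$-component exactly as the semidirect-product multiplication prescribes, while the tail records an element of $S^n$ whose coordinates get permuted according to the $T$-action $(s_{t_1},\ldots,s_{t_n})^t = (s_{t_1t},\ldots,s_{t_nt})$ — and this permutation is supplied precisely by the transition $q_1 \mapsto q^b$ in $Q_2^n$ when the initial symbol $b$ is read. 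Matching these two pieces of data against the normal form for elements of the semidirect product $S^{|T|}\rtimes T$ gives the required isomorphism.

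The main obstacle I expect is the bookkeeping in this last step: one must verify that the interaction between the $T$-action on the tail (implemented by the $q^b$ twist) and the multiplication in $T$ (implemented by the $\ol{bt}$ outputs on the leading symbol) reproduces exactly the semidirect-product composition law, including the correct side on which $T$ acts, and that faithfulness of the $S^n$-action on $(A^n)^\omega$ (inherited from the direct-product construction in \cite[Proposition~5.5]{c_1auto}) lifts to faithfulness on $B(A^n)^\omega$. Once the isomorphism $\Sigma(\B) \cong S\wr T$ is established with $\B$ viewed as an initial-symbol automaton, the theorem follows, since $S\wr T$ is then by definition an initial-symbol automaton monoid.
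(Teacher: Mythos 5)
Your overall strategy --- reuse the automaton $\mathcal{B}$ from Theorem~\ref{wreathsub} and restrict its action to $B(A^n)^\omega$ --- is the right one, but your decision to keep the full state set $Q_1^n\cup Q_2^n\cup T$ introduces a genuine error. An initial-symbol automaton semigroup is, by the paper's definition, obtained ``in the same way as $\Sigma(\mathcal{A})$'', i.e.\ it is generated by \emph{all} of the states, so every state of $Q_2^n$ becomes a generator of the resulting semigroup. A generator $q_2\in Q_2^n$ acts on a string $b\alpha\in B(A^n)^\omega$ by $b\alpha\cdot q_2 = b(\alpha\cdot q)$, with no twist by $b$, whereas under the embedding of $S\wr T$ an element $(\mathbf{s},t)$ acts by $b\alpha\mapsto \overline{bt}\,(\alpha\cdot\mathbf{s}^{b})$, so its effect on the tail necessarily varies with $b$ according to the $T$-action on $S^{|T|}$. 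For the action of $q_2$ to agree with that of some $(\mathbf{s},t)$ one would need $t=1_T$ and $\mathbf{q}^{b}=\mathbf{q}$ for every $b$, which already fails for any nontrivial $S$, $T=C_2$ and $\mathbf{q}=(q,1)$ with $q\neq 1$. Hence with your state set the initial-symbol automaton semigroup strictly contains an isomorphic copy of $S\wr T$ rather than being equal to it; your remark that in this setting ``the states of the automaton and the generators of the semigroup need not coincide'' is precisely the latitude the definition does not grant. Your description of a word's action (leading states consume the initial symbol, the rest act on the tail) describes a single run of the automaton, but each generator in a word begins reading from the start of the string, so a $Q_2^n$-generator always acts untwisted from the initial $B$-symbol onward.

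The paper exploits the restriction to $B(A^n)^\omega$ in the opposite direction: since every string now begins with exactly one $B$-symbol, the sole purpose of having two copies of $Q^n$ in Theorem~\ref{wreathsub} --- recording whether the initial $B$-symbol has yet been read --- disappears, and the two copies are \emph{merged} into a single copy $Q^n$. Each state $q\in Q^n$ acts like its $Q_1^n$-version on the symbol from $B$ (outputting $b$ and moving to $q^{b}$) and like its $Q_2^n$-version on symbols from $A^n$. The state set is then $Q^n\cup T$, every generator acts as an element of $S\wr T$, and the identification with the semidirect product that you outline in your third step goes through for this smaller automaton. With that correction your argument becomes essentially the paper's.
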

\begin{proof}
If in the proof of the previous theorem, we restrict the automaton $\B$ to act only on strings in $BA^\omega$, then one copy of $Q^n$ suffices in the 
state set, since the purpose of the first copy was simply to record whether we have yet encountered a symbol from $B$ in processing the string.  
We can thus obtain $S\wr T$ as an initial-automaton monoid by using an automaton similar to the automaton $\B$ in the previous proof, except that it
has states $Q^n$ in place of $Q_1^n\cup Q_2^n$, and these states act like their corresponding versions in $Q_1^n$ on $B$, and like their corresponding versions
in $Q_2^n$ on $A$.
\end{proof}

It may be worth considering further what kind of restrictions on the strings acted on by an automaton give rise to interesting classes of semigroups.  

\section{Further constructions}

A semigroup $S$ is a \emph{small extension} of another semigroup $T$ if $T\leq S$ and $|S\setminus T|$ is finite.
It is easy to see that in this case $S$ being an automaton semigroup need not imply that $T$ is an automaton semigroup, since 
the free monoid of rank $1$ is an automaton semigroup \cite[Proposition~4.4]{c_1auto}, 
while the free semigroup of rank $1$ is not \cite[Proposition~4.3]{c_1auto}.  
The other direction remains open:

\begin{question}~\label{small}{\rm \cite[Open Problem~5.4]{c_1auto}}
If a semigroup $S$ is a small extension of an automaton semigroup $T$, is $S$ necessarily an automaton semigroup?
\end{question}

We again expect the answer to be no.  One possible counterexample is a certain strong semilattice of two semigroups, with the `lower' semigroup being finite.
Some other open problems include the closure or otherwise of the class of automaton semigroups under Rees matrix constructions,
and whether if $S$ is a semigroup such that adjoining a zero to $S$ results in an automaton semigroup, then $S$ itself must be an automaton semigroup.

\section*{Acknowledgements}

The first author's research was funded by an EPSRC grant EP/H011978/1.

The second author's research was funded by the European
Regional Development Fund through the programme {\sc COMPETE} and by
the Portuguese Government through the {\sc FCT} (Funda\c{c}\~{a}o
para a Ci\^{e}ncia e a Tecnologia) under the project {\sc
PEst-C}/{\sc MAT}/{\sc UI0}144/2011 and through an {\sc FCT}
Ci\^{e}ncia 2008 fellowship.

\bibliography{\jobname}
\bibliographystyle{abbrv}

\end{document}